\newcommand{\ode}[2]{\frac{d{#1}}{d{#2}}}
\newcommand{\Tr}{\text{Tr}}
\title{Persistence of saddle behavior in the nonsmooth limit of \\smooth dynamical systems\footnotemark[1]}
\author{Julie Leifeld$^1$
}
\address{$^1$School of Mathematics, University of Minnesota, Minneapolis, MN 55455}
\email{$^1$leif0020@umn.edu}
\author{Kaitlin Hill$^2$
}
\address{$^2$Department of Engineering Sciences and Applied Mathematics, Northwestern University, Evanston, IL 60208}
\email{$^2$k-hill@u.northwestern.edu} 
\author{Andrew Roberts$^3$
}
\address{$^3$Department of Mathematics, Cornell University, Ithaca, NY 14853}
\email{$^3$andrew.roberts@cornell.edu}
\begin{document}

\begin{abstract}
Models such as those involving abrupt changes in the Earth's reflectivity due to ice melt and formation often use nonlinear terms (e.g., hyperbolic tangent) to model the transition between two states.  For various reasons, these models are often approximated by ``simplified'' discontinuous piecewise-linear systems that are obtained by letting the length of the transition region limit to zero.  The smooth versions of these models may exhibit equilibrium solutions that are destroyed as parameter changes transition the models from smooth to nonsmooth.  Using one such model as motivation, we explore the persistence of local behavior around saddle equilibria under these transitions.  We find sufficient conditions under which smooth models with saddle equilibria can become nonsmooth models with ``zombie saddle" behavior, and analogues of stable and unstable manifolds persist.
\end{abstract}

\keywords{nonsmooth, Filippov, saddle, sliding}

\maketitle

\newcommand{\slugmaster}{\slugger{siads}{201x}{xx}{x}{x--x}}



\pagestyle{myheadings}
\thispagestyle{plain}
\markboth{J. LEIFELD, K. HILL, AND A. ROBERTS}{SADDLE BEHAVIOR IN THE NONSMOOTH LIMIT OF SMOOTH SYSTEMS}

\renewcommand{\thefootnote}{\fnsymbol{footnote}}

{\let\thefootnote\relax\noindent\footnote{The authors would like to acknowledge support of the Mathematics and Climate Research Network (MCRN) under NSF grants DMS-0940363, DMS-0940366, and DMS-0940262.  The work of the second author was supported by a National Science Foundation Graduate Research Fellowship under Grant No. DGE-1324585.}}






\section{Introduction}
\label{sec:introduction}

One of the fundamental concepts in the analysis of smooth dynamical systems is the idea of local behavior.  However, varying a parameter can transform a smooth system into one that is nonsmooth, and nonsmooth systems often have local behavior that is not observed in smooth systems.  Indeed, in many nonsmooth systems, even the standard uniqueness theorem does not apply.   In this paper we analyze persistence of saddle-like behavior in nonsmooth systems.  We conclude that if a saddle equilibrium disappears as a result of the system becoming nonsmooth, and not in the case of a standard smooth bifurcation, saddle behavior will in some sense remain in the nonsmooth system.  We show that the saddle equilibrium in the smooth system limits to a nonsmooth pseudoequilibrium, which is an equilibrium of a Filippov flow.  This pseudoequilibrium is accompanied by stable and unstable manifolds that are analogous to their smooth counterparts.  We call the new pseudoequilibrium point a {\it zombie saddle}, and say the system has zombie saddle behavior.

The system given by
\begin{equation*}
	\dot{\mathbf{x}} = \left\{\begin{array}{cl} p(\mathbf{x}) & h(\mathbf{x})>0 \\ q(\mathbf{x}) & h(\mathbf{x})<0 ,\end{array}\right. 
\end{equation*}
where $p(\mathbf{x})$ and $q(\mathbf{x})$ are smooth functions, is an illustrative example of a nonsmooth system. Notice that the dynamics on the manifold where $h(\mathbf{x})=0$ remain undefined.  This manifold where $h(x)=0$ is called the {\it{discontinuity boundary}}, or {\it{splitting manifold}}, and the system is said to be a piecewise-smooth system.  We are particularly interested in relating the dynamics of a smooth system to those of a nonsmooth approximation to the system obtained through some limiting process. 

{ Conceptual climate models have been formulated using nonsmooth systems dating back at least to Stommel's thermohaline circulation model in 1961.  Over the last decade, the use of nonsmoth systems has become increasingly popular, with \cite{abbot11,jorm,dV07,dV10,Eisenman12,Eisenman09, hogg} all discussing conceptual climate models that are either nonsmooth or exhibit nonsmooth behavior in some limit. In particular, a nonsmooth approximation is often used for the basic energy balance equation 
\begin{equation}
	\label{ebm}
	\frac{dT}{dt} = Q (1-\alpha(T) ) - (A + B T).
\end{equation}  
The albedo function $\alpha(T)$ measures the reflectivity of the Earth's surface.  It is generally assumed to transition from a high albedo at low temperatures due to large quantities of ice to a low albedo at high temperatures.  The transition is often modeled using a hyperbolic tangent:
\begin{equation}
	\label{albedo}
	\alpha(T) = \frac{\alpha_i + \alpha_w}{2} - \frac{\alpha_i - \alpha_w}{2} \tanh \left( \frac{T - T_0}{D} \right).
\end{equation}
$D$ measures how rapidly the transition from high to low albedo occurs.  In the limit $D \rightarrow 0$, the transition is instantaneous, and $\alpha$ is discontinuous.  In \eqref{ebm}, the dependence on greenhouse gas concentration is built into the parameter $A$, with smaller $A$ indicating more greenhouse gases.  However, especially on the time scales of paleoclimatic phenomena like Snowball Earth, $A$ should not be treated as a constant \cite{hogg,walsh2014}.  Introducing $A$ as a (possibly slow) dynamic variable gives the system
\begin{align}
	\label{ebm2D}
	\begin{aligned}
		\dot{T} &= Q (1 - \alpha (T)) - (A+BT) \\
		\dot{A} &= - mA + nT+C,
	\end{aligned}
\end{align}
where $m,n \geq 0 $ \cite{barry2014}.  For certain parameter values the system has three equilibria (see Figure \ref{fig:ebm}(a)), however as $D \rightarrow 0$ the middle equilibrium disappears, as illustrated in Figure \ref{fig:ebm}(b). In the case when the smooth system is bistable and the middle equilibrium is a saddle, the stable manifold to the saddle forms the boundary of the basin of attraction of the two stable equilibria, corresponding to stable climates.
}


\begin{figure}[t]
	\centering
	\subfloat[]{\includegraphics[scale=1]{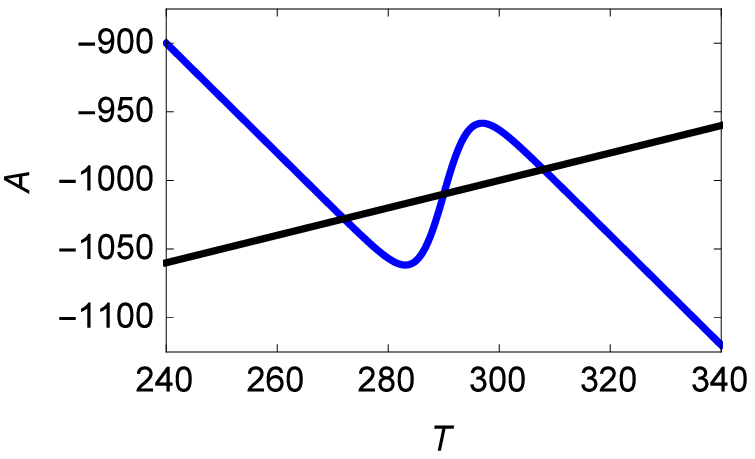}}
	\subfloat[]{\includegraphics[scale=1]{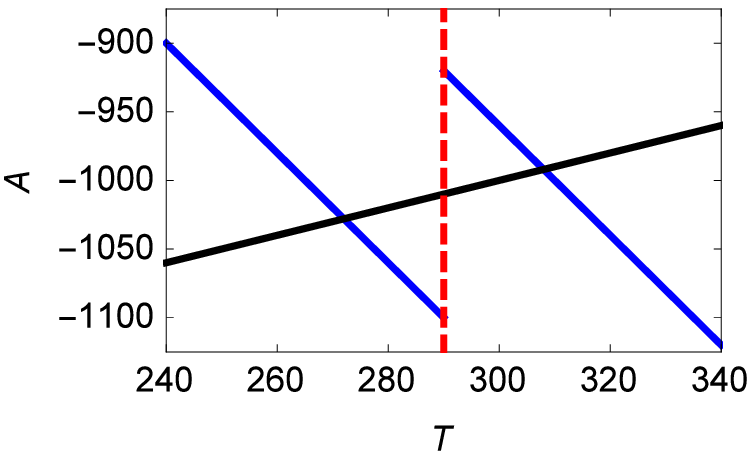}}
	\caption{Nullclines of \eqref{ebm2D} when $m=n=1$, $Q=300$, $\alpha_i=0.8$, $\alpha_w=0.2$, $T_0=290$, $B=4$, $C= -1300$.  (a) shows the case for $D=5$, and (b) shows the limit as $D \rightarrow 0$ with the splitting manifold $T=T_0$ (dashed, red line).  Notice the middle equilibrium from (a) is no longer an equilibrium in (b).}
	\label{fig:ebm}
\end{figure}

In the context of nonsmooth systems, there has been significant interest in the behavior of systems as they transition from smooth to nonsmooth.  For a more in-depth treatment of the recent developments in the dynamics of nonsmooth systems, we refer the reader to the surveys by Colombo, et al. \cite{Colombo12} and Makarenkov, et al. \cite{makarenkov}.  As a system with multiple time scales transitions from smooth to nonsmooth, a technique called pinching has been proposed as a method of preserving dynamics of the smooth system in the nonsmooth limit \cite{simic,Desroches12}. On the other hand, techniques such as blow up and regularization use geometric singular perturbation theory to approximate the behavior in the discontinuity boundary of a nonsmooth system as the system is smoothed\cite{teixeira,Jeffrey14non}.  This process is similar to the blow up method used by Jeffrey \cite{Jeffrey14hidden}, except in \cite{Jeffrey14hidden} more complicated dynamics are allowed to occur in the blown-up splitting manifold.

Nonsmooth systems in which an equilibrium collides with a discontinuity boundary as a parameter varies are said to undergo a border-collision bifurcation \cite{diBernardo,Kuznetsov03,nusse}. However, the interaction between an equilibrium colliding with a discontinuity boundary as the system itself is limiting to a nonsmooth limit is not well-understood. Smooth systems that limit to a nonsmooth variant as some parameter varies have held recent interest, notably in the context of climate dynamics \cite{Welander82,Abshagen04,Eisenman12,widiasih}. In this paper we specifically study systems with saddle equilibrium points that disappear in a traditional sense as the system becomes nonsmooth.  By this we mean that the equilibrium limits to the nonsmooth boundary as $a\rightarrow 0$, and hence is not a place where the vector field defined in the system is $0$. Before completing the analysis, we first define some syntax in Section \ref{sec:techniques}.  In Section \ref{sec:Results} we state and prove the main results of the paper. We consider three cases of systems along with their nonsmooth limits: a system that has no sliding region in the nonsmooth limit, a system with a repelling sliding region, and a system with an attracting sliding region. In all cases we prove the existence of saddle equilibria in the systems throughout the limiting process.  Additionally, we show show that the saddle behavior persists in the limit.  We call these persistent pseudoequilibria zombie saddles, because they are the remnants of ``real" saddle equilibria from the smooth systems. Then in Section \ref{sec:examples} we provide examples of systems which exhibit this type of behavior. We conclude in Section \ref{sec:discussion} with a discussion of our results.

\section{Nonsmooth Analysis Techniques}
\label{sec:techniques}

Our analysis uses the standard syntax of nonsmooth systems.  For simplicity we will assume there is a unique $C^1$ splitting manifold of codimension 1.  Then, we can define a scalar function $h(\mathbf{x}):\mathbb{R}^n\rightarrow \mathbb{R}$, with $h(\mathbf{x})=0$ along the manifold.  In the regions where the system is smooth, we let $f^+(\mathbf{x})$ be the vector field where $h(\mathbf{x})>0$, and define $f^-(\mathbf{x})$ similarly.  Let $\lambda$ be a step function, defined 
\[
\lambda=\left\{\begin{array}{cl} 1 & h(\mathbf{x})>0\\ 0 & h(\mathbf{x})<0. \end{array}\right.
\]
Then, as in \cite{filippov}, the entire system can be written in the form of a convex combination of the vector fields
\begin{align}
\label{fc}
\dot{\mathbf{x}}= \mathbf{f}(\mathbf{x},\lambda)&=\left\{\begin{array}{cc} f^+(\mathbf{x}) & h(\mathbf{x})>0 \\ f^-(\mathbf{x}) & h(\mathbf{x})<0 \end{array}\right. \nonumber \\
 &= \lambda f^+(\mathbf{x})+(1-\lambda)f^-(\mathbf{x}).
\end{align}
A sliding solution exists along the nonsmooth boundary if the following system can be solved for $\lambda\in[0,1]$.
\begin{equation}
\label{fsol}
\begin{array}{r}S=\mathbf{f}(\mathbf{x},\lambda)\cdot\nabla h(\mathbf{x})=0\\
h(\mathbf{x})=0
\end{array}
\end{equation}
In a Filippov system, solutions to \eqref{fsol} are only possible in regions where the dot product $f^+\cdot f^-<0$.  Intuitively, one would expect sliding solutions here, as solutions should cross the splitting manifold when the vector fields point in the same direction.  Therefore, sliding regions are defined as regions along the splitting manifold where $f^+\cdot f^-<0$.  The boundaries of the sliding region are then places where either vector field is tangent to the splitting manifold, or more formally, when $f^{\pm}\cdot\nabla h =0$.  A tangency in $f^+$ is called visible if a solution through the tangent point exists entirely in $f^+$, and is invisible if the solution exists entirely in $f^-$.  Corresponding definitions exist for tangencies in $f^-$.  The stability of the sliding solution is determined by the sign of $$\ode{}{\lambda}S$$ along $h(x)=0$, with $$\ode{}{\lambda}S<0$$ indicating a stable sliding region.  Alternatively, $$\ode{}{\lambda}S>0$$ indicates that the sliding region is unstable.  See Figure ~\ref{fig:generic} for an illustration of attracting and repelling sliding regions. Recent work by Jeffrey has expanded on this method, by using the idea that convex combinations of $f^+$ and $f^-$ are not necessary to the analysis \cite{Jeffrey14hidden}.  For example, adding nonlinear terms of the parameter $\lambda$, such as $\lambda(1-\lambda)$, which are zero away from the nonsmooth boundary, also gives a method of finding sliding solutions which are beyond the scope of the standard Filippov analysis.  These solutions can exist in regions for which the vector field points in the same direction, and hence one would not expect sliding solutions.  To avoid these difficulties, in this paper we assume the nonsmooth functions can be written linearly with respect to $\lambda$.  In other words, we let $g_a(x)$ be a smooth, monotonically increasing function, and as $a$ approaches 0, $g_a(x)$ approaches $g_0(x)$, with 
\begin{equation}
\label{gdef}
g_0(x)=\left\{\begin{array}{cl}g_+(x) & x>0\\ g_-(x) & x<0.\end{array}\right. 
\end{equation}
It is possible to define $g_0(0)$ based on a pointwise convergence of $g_a(0)$.  However, this precludes the possibility of any dynamic behavior along the splitting manifold which may be indicated by real behavior in the application system.  Therefore, we instead use the nonsmooth analysis conventions previously described, and also assume
\[
g_0(x)=\lambda g_+(x)+(1-\lambda)g_-(x).
\]
It is important to note the breakdown of uniqueness of solutions in sliding regions of nonsmooth systems.  Several initial conditions can generate solutions that approach the nonsmooth boundary in either forward or backward time and intersect. In Figure \ref{fig:generic}(a), solutions are nonunique in backward time. On the other hand, in Figure \ref{fig:generic}(b) solutions are nonunique in forward time; that is, only one solution trajectory can enter the sliding region from either the right-hand or left-hand side, but a family of solutions leaves the sliding region from both sides. Hence, uniqueness in both forward and backward time is not to be expected in these systems.  However, as we will see in Section \ref{sec:Results}, it may still be possible to choose unique trajectories which correspond to stable and unstable manifolds.  

\begin{figure}[!hb]
	\centering
	\subfloat[]{\includegraphics[scale=1]{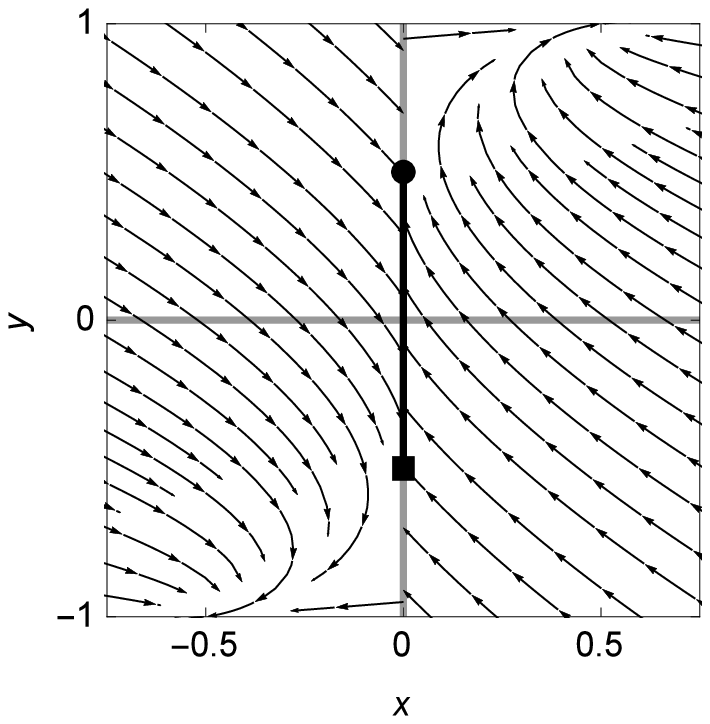}}
	\subfloat[]{\includegraphics[scale=1]{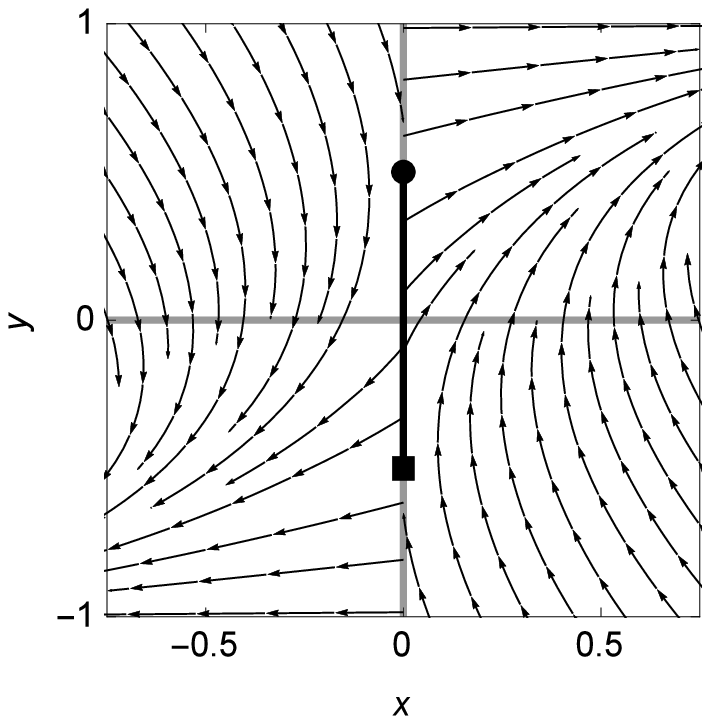}}
\caption{Typical phase diagrams for nonsmooth systems with sliding regions, with (a) attracting sliding and (b) repelling sliding. In (a), trajectories may enter the sliding region at any point along the line between the dot and the square, while in (b) trajectories may only enter the sliding region at the dot from the left and at the square from the right.}
\label{fig:generic}
\end{figure}

\section{Results}
\label{sec:Results}

In this section we prove the persistence of saddle equilibria in three distinct cases, when the sliding region is stable, unstable, or nonexistent.  In each case, we prove the existence of saddle equilibria in the smooth systems, as well as saddle psuedoequilibria in the corresponding nonsmooth systems.  We also prove that stable and unstable manifolds persist in some form.

\subsection{No sliding solutions}
\label{sec:resultsnoslide}

$ $\newline\vspace{-1em}
\newtheorem{thmnoslide}{Theorem}[section]
\begin{thmnoslide}
Let $$g_a(x)=g\left(\frac{x}{a}\right),$$ $a\neq0$, be a monotonically increasing, smooth function such that $g_a(x)\rightarrow g_0(x)$ as $a\rightarrow 0$, with 
\[
g_0(x)=\left\{\begin{array}{cl}g_+(x) & x>0 \\ g_-(x) & x<0, \end{array}\right. 
\]
where $g_+$ and $g_-$ are smooth functions up to the boundary $x=0$.
Furthermore, assume $g_0 = \lambda g_+(x)+(1-\lambda)g_-(x)$, where 
\[
\lambda = \left\{\begin{array}{cl} 1& x>0 \\ 0 & x<0.\end{array}\right.
\]
Finally, let $g_-(0)<f(0)<g_+(0)$, where $f$ is a smooth function.  Then, when $0<a\ll1$, the system 
\begin{equation}
\label{dblTangency}
\begin{array}{rcl}
\dot{x}&=&y-f(x)\\
\dot{y}&=&g_a(x)-y,
\end{array}
\end{equation}
contains a saddle equilibrium point $p_a=(x_a,y_a)$, such that $x_a\rightarrow0$ as $a\rightarrow0$.  When $a=0$, the nonsmooth system contains a pseudoequilibrium on the splitting manifold {$x=0$}, and stable and unstable manifolds exist locally.
\end{thmnoslide}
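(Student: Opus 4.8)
The plan is to treat the smooth system ($a>0$) and its nonsmooth limit ($a=0$) separately and then relate them. For $a>0$, the equilibria of \eqref{dblTangency} are precisely the points with $y=f(x)=g_a(x)$, so everything reduces to studying the zeros of $\phi_a(x):=g_a(x)-f(x)$ near $x=0$. Fixing a small $\delta>0$, I would use $g_a(\pm\delta)=g(\pm\delta/a)\to g_\pm(0)$ together with $g_-(0)<f(0)<g_+(0)$ to get $\phi_a(-\delta)<0<\phi_a(\delta)$ for $a$ small, hence (intermediate value theorem) a zero $x_a\in(-\delta,\delta)$; since $\delta$ is arbitrary this forces $x_a\to0$, and then $y_a:=f(x_a)\to f(0)$. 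To pin down a single smooth branch one can instead rescale $u=x/a$: then $g(u)-f(au)\to g(u)-f(0)$ as $a\to0$, which has the unique root $u^*$ defined by $g(u^*)=f(0)$, and the implicit function theorem (using $g'(u^*)>0$, the natural nondegeneracy for a monotone transition) produces a smooth family $u(a)\to u^*$ with $x_a=a\,u(a)$.

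Saddle-ness of $p_a=(x_a,y_a)$ then follows from a one-line Jacobian computation: the linearization of \eqref{dblTangency} has trace $-f'(x_a)-1$ and determinant $f'(x_a)-g_a'(x_a)$. Since $g_a'(x_a)=a^{-1}g'(x_a/a)$ and $x_a/a\to u^*$ with $g'(u^*)>0$, the determinant tends to $-\infty$ while the trace stays bounded, so for all small $a>0$ the linearization has two real eigenvalues of opposite sign and $p_a$ is a saddle. (Both eigenvalues blow up and both eigendirections approach the vertical, which anticipates the geometry of the limiting invariant curves.)

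For $a=0$, write $f^\pm(x,y)=(y-f(x),\,g_\pm(x)-y)$ and $h(x,y)=x$, so $\nabla h=(1,0)$. The key structural fact is that on $\{x=0\}$ the two fields share the component along $\nabla h$, namely $f^\pm\cdot\nabla h=y-f(0)$; hence the tangency locus $f^\pm\cdot\nabla h=0$ is the single point $(0,f(0))$ — a double tangency — and $S=(\lambda f^++(1-\lambda)f^-)\cdot\nabla h=y-f(0)$ does not depend on $\lambda$, so \eqref{fsol} is unsolvable for $\lambda\in[0,1]$ off the line $y=f(0)$: the ``no sliding solutions'' case. At $(0,f(0))$, on the other hand, \eqref{fsol} holds for every $\lambda$, and the remaining component $\lambda g_+(0)+(1-\lambda)g_-(0)-f(0)$ vanishes for $\lambda^*=(f(0)-g_-(0))/(g_+(0)-g_-(0))\in(0,1)$ thanks to $g_-(0)<f(0)<g_+(0)$; so $(0,f(0))$ is a pseudoequilibrium.

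Finally, for the local stable and unstable manifolds I would exploit that both tangencies are visible: differentiating $f^\pm\cdot\nabla h=y-f(x)$ along the respective flow at $(0,f(0))$ gives $g_\pm(0)-f(0)$, positive for $f^+$ and negative for $f^-$, so the $f^+$-orbit through $(0,f(0))$ is an arc lying (apart from the contact point) in $x>0$ and the $f^-$-orbit an arc in $x<0$, each crossing the level $y=f(0)$ at its contact. This cuts each arc into an incoming half that reaches $(0,f(0))$ in finite forward time and an outgoing half emanating from it; assembling the two incoming halves yields a $C^1$ curve $W^s_{\mathrm{loc}}$ through the sectors $\{x>0,\,y<f(0)\}$ and $\{x<0,\,y>f(0)\}$, and the two outgoing halves yield $W^u_{\mathrm{loc}}$ through $\{x>0,\,y>f(0)\}$ and $\{x<0,\,y<f(0)\}$ — distinct curves meeting only at the pseudoequilibrium, which is exactly the local phase portrait of a saddle. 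One also checks these are the \emph{only} orbits with $(0,f(0))$ as forward, respectively backward, limit, since every other nearby initial point sits in a crossing region and its orbit leaves a neighborhood after meeting $\{x=0\}$ at some $y\neq f(0)$. The main obstacle is precisely this last step: at a pseudoequilibrium the fields $f^\pm$ do not vanish, infinitely many Filippov solutions pass through $(0,f(0))$, and finite-time arrival replaces asymptotic convergence, so the work lies in singling out the two invariant arcs that deserve the names stable and unstable manifold, verifying the flow direction on each, and ruling out other orbits limiting to the point. Combined with $p_a\to(0,f(0))$ — and, with a little more care, $W^{s/u}_a\to W^{s/u}_{\mathrm{loc}}$ — this establishes the persistence claim.
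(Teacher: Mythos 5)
Your proposal is correct, and for most of the theorem it follows the same skeleton as the paper: equilibria of \eqref{dblTangency} are zeros of $g_a(x)-f(x)$, located in $(-\delta,\delta)$ by the intermediate value theorem; the saddle property comes from the sign of the Jacobian determinant $f'(x_a)-g_a'(x_a)=f'(x_a)-\tfrac{1}{a}g'(x_a/a)$, which the paper obtains after the same rescaling $\xi=x/a$; and the point $(0,f(0))$ is identified as a double visible tangency with no sliding segment and a pseudoequilibrium value $\lambda^*\in(0,1)$. Two things you do differently are worth noting. First, your implicit-function-theorem step producing a branch $x_a=a\,u(a)$ with $g'(u^*)>0$ is not in the paper; the paper only asserts $g'(\xi)>0$ from monotonicity (which strictly gives $g'\geq 0$), so your nondegeneracy hypothesis makes explicit what is needed for $\det J<0$ and for $x_a/a$ to converge. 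Second, and more substantively, your construction of the stable/unstable arcs is a genuinely different route: you take the unique orbit of each smoothly extended one-sided field $f^\pm$ through $(0,f(0))$ and use the second-order contact condition $\tfrac{d^2}{dt^2}h=g_\pm(0)-f(0)$ to show each orbit is a visible arc confined to its half-plane, then split it at the contact into incoming and outgoing halves. The paper instead runs a Wa\.{z}ewski-type box argument (a rectangle adjacent to the tangency in which $\dot x>0$ and $\dot y<0$, hence no invariant subset, hence some trajectory exits exactly at the corner point $(0,f(0))$), with a time reversal to handle $f'(0)<0$. Your version is more direct, yields uniqueness of the arcs for free from uniqueness of solutions of the smooth one-sided fields, and does not require a case split on the sign of $f'(0)$; the paper's version buys a technique that generalizes more readily when the one-sided fields cannot be smoothly extended across the boundary. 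Both arrive at the same four-arc, finite-time-arrival saddle picture, so there is no gap.
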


\begin{proof}

First we look at the case $a\neq0$.  Equilibrium solutions are found by solving $f(x)-g_a(x)=0$.  Consider an arbitrarily small interval around $x=0$, $(-\delta,\delta)$.  Let $d=f(0)-g_+(0)<0$.  Choose $x_1\in (0,\delta)$ such that 
\[
\begin{array}{rcl}
|f(x_1)-f(0)|&<&\dfrac{|d|}{3}\\[10pt]
|g_+(0)-g_+(x_1)|&<&\dfrac{|d|}{3}
\end{array}
\]
It is possible to choose this $x_1$ because of the continuity of $f$ and $g_+$.  Then, choose $a=a_1\neq0$ such that $$|g_+(x_1)-g_a(x_1)|<\frac{|d|}{3}.$$  This small $a$ exists because of the pointwise convergence of $g_a$.  We show that $f(x_1)-g_a(x_1)<0$.
\begin{align*}
f(x_1)-g_a(x_1)&=f(0)-g_+(0)+\left(g_+(0)-g_+(x_1)\right)+\left(f(x_1)-f(0)\right)+\left(g_+(x_1)-g_a(x_1)\right) \\
&<d+|d| \\
&=0
\end{align*}
A similar argument works for $x_2\in(-\delta,0)$.  Therefore we conclude by the Intermediate Value Theorem that $f(x)-g_a(x)=0$ for some $x\in(x_2,x_1)\subset(-\delta,\delta)$.

To see that this equilibrium is a saddle, it is convenient to make a change of variables, which is valid for all $a\neq 0$.  Let $\xi=x/a$.  Then, the system becomes
\[
\begin{array}{rcl}\dot{\xi}&=&\dfrac{1}{a}\left(y-f(a\xi)\right)\\
\dot{y}&=&g(\xi)-y.
\end{array}
\]
The Jacobian of this system is
\[
J=\begin{bmatrix}-f'(a\xi) & \frac{1}{a}\\ g'(\xi)&-1\end{bmatrix}
\]
with $$\det J = f'(a\xi)-\frac{1}{a}g'(\xi).$$  Because $g$ is monotone increasing, we know $g'(\xi)>0$.  Thus, it is immediately clear that for sufficiently small $a$, $\det J <0$.

\vspace{1em}

When $a=0$, the system is discontinuous and the splitting manifold is the line $x=0$.  The boundaries of the Filippov sliding region occur where $y=f(0)$.  Given that $f$ is a function, there is only one point at which this occurs, meaning that there is no linear sliding region.  This point is a pseudoequilibrium of the Filippov flow, as the vector fields on either side of the splitting manifold are colinear and opposite here.  It is easy to check that the the tangency is visibile on each side, as seen in Figure \ref{fig:no-sliding}(a).

Furthermore, we can show the existence of trajectories through this tangency.  First, we consider the system defined where $x<0$, and the case $f'(0)>0$.  Let $y>f(0)$.  Then, because $f(0)>g_-(0)$, there is a region $ x\in(-\delta,0)$, $y\in(f(0),f(0)+\varepsilon)$ for which $\dot{y}<0$.  Moreover, because $f'(x)>0$, $\dot{x}>0$ in this region.  This implies that there are no strictly invariant sets in the region, and therefore every point can be taken continuously to the left and upper edges of the boundary using the map defined by the flow. In particular, there is a trajectory in this set which maps to $(0,f(0))$, the tangency point, under the flow.  This trajectory is an analogue of a stable manifold, along which the equilibrium can be reached in finite time.  Because $(0,f(0))$ is a tangency of the vector field when $x<0$, the trajectory remains in the $x<0$ region, and the portion which approaches $(0,f(0))$ in backward time likewise acts as an analogue of an unstable manifold.  If $f'(0)<0$, we can make a similar argument after reversing time.  Then, there is a region $x\in(-\delta,0)$, $y\in(f(0),f(0)-\varepsilon)$ for which $\dot{x}>0$ and $\dot{y}>0$.  This implies the existence of in initial condition in this region which approaches the tangency $(0,f(0))$ in backward time, with respect to the original system.  This trajectory will remain in the region $x<0$ for the same reason, and acts as an analogue to stable and unstable manifolds.  The same arguments can be made for the existence of a trajectory through the tangency in the region $x>0$.  Thus in the full system, trajectories through the double tangency are not unique.  Much like a saddle, there are two trajectories approaching the point, and two trajectories leaving.  These trajectories function as stable and unstable manifolds to a saddle point, especially in their role as separatrices.  Much like stable and unstable manifolds, their intersection consists of one point, however, in this case that point is reachable in finite time. See Figure \ref{fig:no-sliding}(b) for an illustration of this idea, where the dark green boxes represent the boundaries of the flow in the regions discussed. 
\end{proof}

\begin{figure}[!ht]
	\centering
	\subfloat[]{\includegraphics[scale=1]{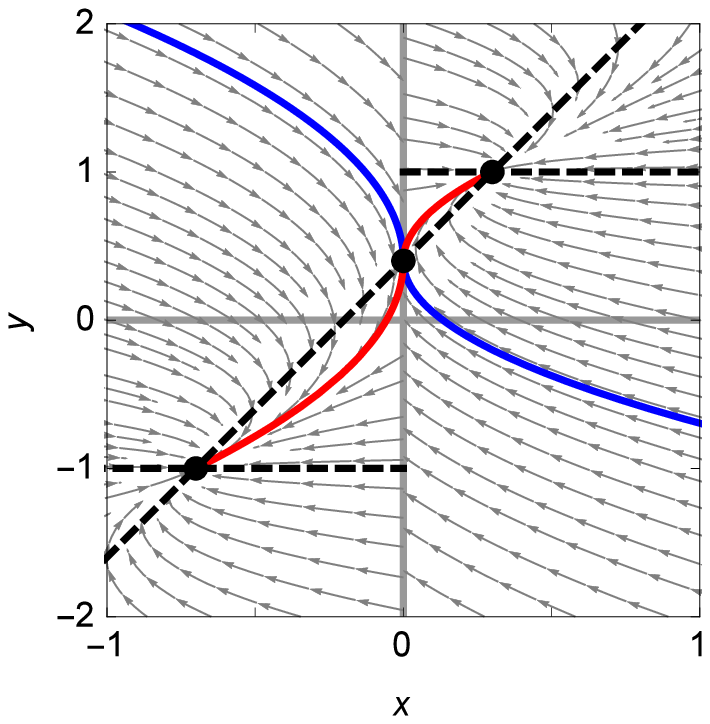}}
	\subfloat[]{\includegraphics[scale=1]{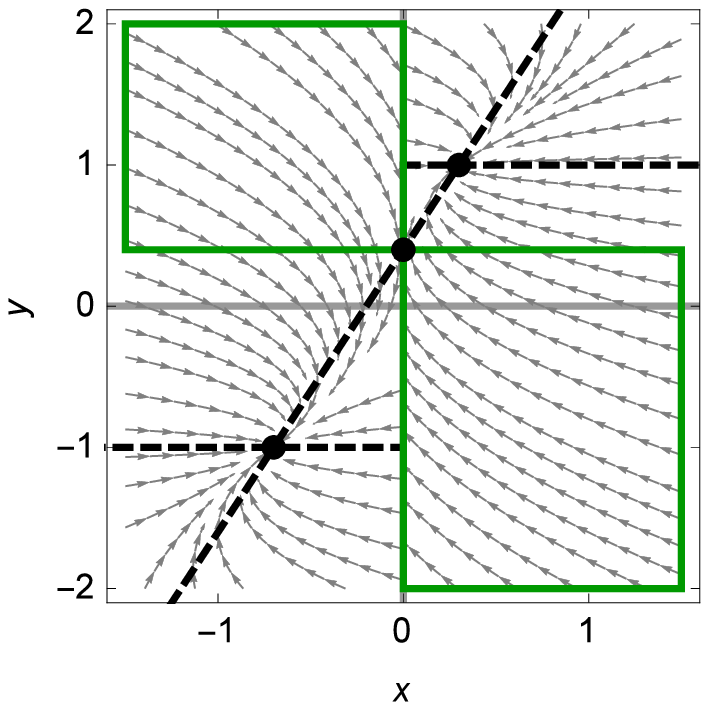}}
\caption{(a) Example phase diagram of the nonsmooth limit of (\ref{dblTangency}), with $f(x)=2x+0.4$ and $g_a(x)=\tanh(\frac{x}{a})$, so that $g_0(x)=\pm 1$. Blue curves represent the trajectories that have behavior similar to a stable manifold, and red curves exhibit behavior similar to an unstable manifold. The black points show the equilibria of the system. Dashed lines represent the nullclines of the system. (b) An illustration of the visible tangency using $f$ and $g$ as defined in (a). The dashed lines show the nullclines of the system, and the solid green lines give an illustration of the boxes considered in finding the tangency.}
\label{fig:no-sliding}
\end{figure}

\subsection{A repelling sliding region}

$ $\newline\vspace{-1em}
\newtheorem{thmrepslide}{Theorem}[section]
\begin{thmrepslide}
Under the assumptions in Theorem 1, when $0<a\ll1$, the system 
\begin{equation}
\label{rSliding}
\begin{array}{rcl}
\dot{x}&=&g_a(x)-y\\
\dot{y}&=&f(x)-y,
\end{array}
\end{equation}
contains a saddle equilibrium point $x_a$, such that $x_a\rightarrow0$ as $a\rightarrow0$.  When $a=0$, the nonsmooth system contains a stable pseudoequilibrium in a repelling sliding flow on the splitting manifold $x=0$, and stable and unstable manifolds exist locally.

\end{thmrepslide}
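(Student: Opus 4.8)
The plan is to follow the template of the proof of Theorem~1, handling $a\neq 0$ first and then the Filippov system at $a=0$. For $0<a\ll 1$, the equilibria of \eqref{rSliding} satisfy $g_a(x)=f(x)$ and $y=f(x)$, i.e.\ exactly the scalar equation $f(x)-g_a(x)=0$ analyzed in Theorem~1; so the same Intermediate Value Theorem argument (using $g_-(0)<f(0)<g_+(0)$ and the pointwise convergence $g_a\to g_0$) produces an equilibrium $p_a=(x_a,y_a)$ with $x_a\to 0$ and $y_a=f(x_a)$. The Jacobian of \eqref{rSliding} has determinant $\det J=f'(x)-g_a'(x)=f'(x)-\tfrac{1}{a}g'(x/a)$, which is precisely the determinant obtained in Theorem~1. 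Since $g$ is monotonically increasing, $g'>0$, and since the equilibrium relation $g(x_a/a)=f(x_a)\to f(0)\in(g_-(0),g_+(0))$ confines $x_a/a$ to a compact set on which $g'$ is bounded below, we get $\det J<0$ for $a$ small, so $p_a$ is a saddle. (Here the trace $g_a'(x)-1$ is large and positive, so the unstable eigenvalue dominates, but this does not affect the classification.)

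Next, for $a=0$ the splitting manifold is $x=0$ with $h(x)=x$, and $f^{\pm}=(g_{\pm}(x)-y,\,f(x)-y)$. Along $x=0$ the normal components are $f^{\pm}\cdot\nabla h=g_{\pm}(0)-y$, which have opposite signs precisely for $g_-(0)<y<g_+(0)$, where both vector fields point away from $x=0$; hence the open segment $\Sigma=\{0\}\times(g_-(0),g_+(0))$ is a repelling sliding region (equivalently $\tfrac{d}{d\lambda}S=g_+(0)-g_-(0)>0$). Because the second components of $f^+$ and $f^-$ coincide, the Filippov sliding field on $\Sigma$ reduces to the scalar equation $\dot y=f(0)-y$, independent of $\lambda$; its unique equilibrium $y=f(0)$ is exponentially stable, and $f(0)\in(g_-(0),g_+(0))$ by hypothesis. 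Thus $(0,f(0))$ is a pseudoequilibrium lying in the interior of the repelling sliding region and attracting within the sliding flow --- a stable pseudoequilibrium in a repelling sliding flow.

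It remains to exhibit the local manifolds. The segment $\Sigma$ (or, locally, $\{0\}\times(f(0)-\eta,f(0)+\eta)$) is invariant and foliated by sliding trajectories all converging to $(0,f(0))$ as $t\to\infty$, so it plays the role of a one-dimensional stable manifold --- approached asymptotically here, rather than in finite time as in Theorem~1. For the unstable manifold, note that $f^+(0,f(0))$ has first component $g_+(0)-f(0)>0$, so $f^+$ points strictly into $x>0$ at $(0,f(0))$; by continuity $\dot x\ge\tfrac{1}{2}\bigl(g_+(0)-f(0)\bigr)>0$ on a neighborhood, so the forward $f^+$-trajectory issuing from $(0,f(0))$ enters $x>0$ and leaves every small neighborhood of $(0,f(0))$. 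Symmetrically, $g_-(0)-f(0)<0$ drives the forward $f^-$-trajectory from $(0,f(0))$ into $x<0$ and out of the neighborhood. These two well-defined branches form a one-dimensional unstable manifold that meets $\Sigma$ transversally at the pseudoequilibrium, and, as in Theorem~1, they act as separatrices despite the non-uniqueness of escaping Filippov solutions.

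The main obstacle, exactly as in Theorem~1, is this final step: deciding which trajectories deserve the names ``stable'' and ``unstable manifold'' in the presence of the non-uniqueness forced by a repelling sliding region, and verifying the escape estimate together with the separatrix property. By comparison, the existence and saddle character of $p_a$ for $a\neq 0$ is essentially immediate, reducing to the computation already carried out in Theorem~1.
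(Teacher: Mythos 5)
Your proposal is correct and takes essentially the same route as the paper: the same Intermediate Value Theorem argument and determinant computation $\det J=f'(x)-\tfrac{1}{a}g'(x/a)<0$ for the saddle at $a\neq 0$ (you work in the original coordinates rather than rescaling by $\xi=x/a$, and add a useful compactness remark pinning down why $g'$ is bounded below), followed by the same Filippov analysis identifying the repelling sliding segment, the attracting sliding equilibrium at $y=f(0)$, and the sliding region together with the two escaping trajectories as the stable and unstable manifold analogues.
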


\begin{proof}

Again, equilibria are determined by the equation $f(x)-g_a(x)=0$.  An identical calculation to that in the proof of Theorem 1 shows that an equilibrium solution exists in $(-\delta,\delta)$.  The calculation will not be repeated here.  
Doing the same convenient coordinate change as before, we get a new system
\[
\begin{array}{rcl}
\dot{\xi}&=&\frac{1}{a}\left(g(\xi)-y\right)\\
\dot{y}&=&f(a\xi)-y.
\end{array}
\]
In this system the Jacobian is 
\[
J=\begin{bmatrix} \frac{1}{a}g'(\xi) & -\frac{1}{a}\\ a f'(a\xi) &-1 \end{bmatrix}
\]
with $$\det J= f'(a\xi)-\frac{1}{a}g'(\xi).$$  Again we have that $\det J<0$, giving us a saddle equilibrium.

\vspace{1em}

The nonsmooth version of this system also shows saddle like behavior as $g_a(x)$ approaches a step function in the $a=0$ limit. See Figure \ref{fig:repelling-sliding} for an example of a phase diagram of (\ref{rSliding}). In this case, tangencies to the discontinuity boundary occur when $y=g_+(0)$ and $y=g_-(0)$, implying that the sliding region has nonzero length.  Moreover, this sliding region is repelling, which means that any solution starting on the sliding region will leave arbitrarily.  To see this, we will use some formalism.  Recall that $g_0(x)=\lambda g_+(x)+(1-\lambda)g_-(x)$.  We define $h(x,y)$ to be a scalar function such that $h>0$ when $x>0$, and $h<0$ when $x<0$.  Then, sliding solutions exist where $\mathbf{f}(\mathbf{x},\lambda)\cdot \nabla h =0$ can be solved for some $\lambda\in[0,1]$ along $x=0$.  In this case, it is clear that sliding solutions exist on $[g_-(0),g_+(0)]$, as asserted previously.  Moreover, as in \cite{Jeffrey14hidden}, stability of the sliding region can be determined by 
\[
S(\mathbf{x},\lambda)=\ode{}{\lambda}\mathbf{f}\cdot\nabla h,
\]
where $S<0$ implies an attracting sliding region, and $S>0$ implies repelling.  In this example, $h=x$, so \[
\mathbf{f}\cdot \nabla h =\lambda (g_+(0)-y)+(1-\lambda)(g_-(0)-y).
\]
\[\ode{}{\lambda}\mathbf{f}\cdot\nabla h = g_+(0)-g_-(0)>0,
\]
indicating repelling sliding.
  
\vspace{1em}

We can also use this formulation to find the flow along the sliding region.  Using \eqref{fsol} to solve for $\lambda$ and plugging this solution into the original equations, we get an equation for the flow,
\[
\dot{y}=f(0)-y.
\]
It is clear that this has one stable equilibrium solution, $y=f(0)$.

\vspace{1em}

Because the repelling sliding region has one stable equilibrium, the system has behavior analogous to a saddle.  This saddle does not have unique trajectories acting as separatrices, as Section \ref{sec:resultsnoslide}.  Instead, a family of trajectories will move along the sliding region, and each will leave the manifold at some point before it can reach the equilibrium.  However, the sliding region itself might be considered analogous to a stable manifold.  For the same reasons as described in section \ref{sec:resultsnoslide}, there is a unique trajectory on each side of the discontinuity which emanates from the equilibrium on the sliding region.   These two trajectories are analogous to an unstable manifold.
\end{proof}

\vspace{1em}

\begin{figure}[!ht]
	\centering
	\includegraphics[scale=1]{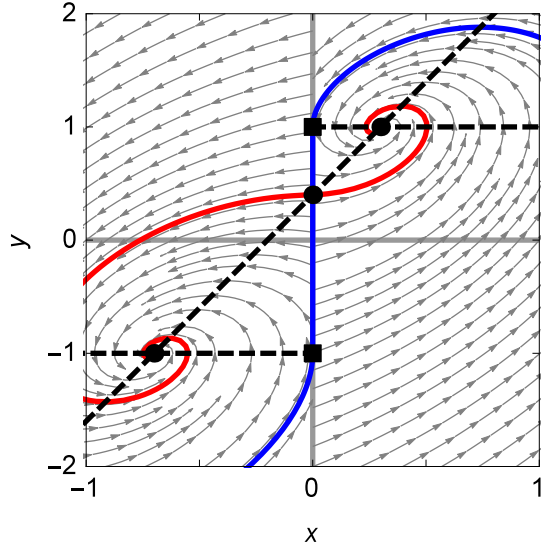}
\caption{Example phase diagram of the nonsmooth limit of (\ref{rSliding}), with $f(x)=2x+0.4$ and $g_a(x)=\tanh(\frac{x}{a})$, so that $g_0(x)=\pm 1$. Blue curves represent the trajectories that have behavior similar to a stable manifold, and red curves correspond to behavior similar to an unstable manifold. The points show the equilibria of the system, and the squares show where the sliding region begins/ends. Dashed black lines represent the nullclines of the system.}
\label{fig:repelling-sliding}
\end{figure}

\subsection{An attracting sliding region}

$ $\newline\vspace{-1em}
\newtheorem{thmattslide}{Theorem}[section]
\begin{thmattslide}
Under the assumptions in Theorem 1, when $0<a\ll1$, the system 
\begin{equation}
\label{aSliding}
\begin{array}{rcl}
\dot{x}&=&y-g_a(x)\\
\dot{y}&=&y-f(x),
\end{array}
\end{equation}
contains a saddle equilibrium point $x_a$, with $x_a\rightarrow0$ as $a\rightarrow0$.  When $a=0$, the nonsmooth system contains an unstable pseudoequilibrium in a stable sliding region on the splitting manifold $x=0$, and stable and unstable manifolds exist locally.

\end{thmattslide}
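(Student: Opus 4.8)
The plan is to mirror the two-step structure of Theorems 1 and 2, handling $a\neq0$ and then $a=0$; the observation that makes this straightforward is that system \eqref{aSliding} is exactly the time reversal of system \eqref{rSliding} (sending $t\mapsto-t$ negates both right-hand sides), and Filippov sliding commutes with time reversal, so nearly everything transfers from the proof of Theorem 2 with the words ``attracting'' and ``repelling'', and ``stable'' and ``unstable'', interchanged. For $a\neq0$, equilibria of \eqref{aSliding} solve $y=g_a(x)=f(x)$, which is the same equation as in Theorems 1 and 2, so the Intermediate Value Theorem argument from the proof of Theorem 1 yields an equilibrium $p_a=(x_a,y_a)$ with $x_a\to0$, and I would simply cite it. Rescaling by $\xi=x/a$ gives $\dot\xi=\tfrac1a(y-g(\xi))$, $\dot y=y-f(a\xi)$; the rescaled vector field is the negative of the one appearing in the proof of Theorem 2, so its Jacobian $J$ has the same determinant,
\[
\det J = f'(a\xi)-\frac1a g'(\xi),
\]
which is negative for $0<a\ll1$ by monotonicity of $g$. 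Hence $p_a$ is a saddle.

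For $a=0$, take $h=x$. The $x$-components of $f^+$ and $f^-$ along $\{x=0\}$ are $y-g_+(0)$ and $y-g_-(0)$, so the bounding tangencies lie at $y=g_+(0)$ and $y=g_-(0)$, and since $g_-(0)<g_+(0)$ the sliding segment $[g_-(0),g_+(0)]$ has positive length. Here
\[
\ode{}{\lambda}\!\left(\mathbf f\cdot\nabla h\right)=g_-(0)-g_+(0)<0,
\]
so the sliding region is attracting. Solving \eqref{fsol} for $\lambda$ and substituting, the sliding flow is governed by the $y$-equation, which on $\{x=0\}$ is $\dot y=y-f(0)$ independently of $\lambda$; this has the single pseudoequilibrium $(0,f(0))$, lying strictly inside the segment since $g_-(0)<f(0)<g_+(0)$, and unstable within the segment because $\tfrac{d}{dy}(y-f(0))=1>0$. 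Differentiating $f^\pm\cdot\nabla h$ along the flow at $y=g_\pm(0)$ and using $g_-(0)<f(0)<g_+(0)$ to sign the second-order terms shows both tangencies are visible.

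Finally, for the invariant-manifold analogues the time-reversal correspondence with Theorem 2 interchanges its stable and unstable objects. The attracting sliding segment, together with the two visible-tangency trajectories leaving its endpoints $(0,g_\pm(0))$, serves as an analogue of an unstable manifold, since every trajectory starting on the segment is repelled from $(0,f(0))$ and exits at an endpoint. For the stable manifold, note that at $(0,f(0))$ the vector fields from $x>0$ and $x<0$ are $(f(0)-g_+(0),0)$ and $(f(0)-g_-(0),0)$, both nonzero and transverse to $\{x=0\}$, pointing into it from their respective sides; hence the backward-time flow of $(0,f(0))$ into each half-plane is, by ordinary uniqueness for the smooth ODE there, the unique trajectory on that side reaching $(0,f(0))$ in finite forward time, and the two halves glue to a single separatrix through the pseudoequilibrium along which the zombie saddle is reached in finite time. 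Because any trajectory meeting $\{x=0\}$ at a point other than $(0,f(0))$ slides away and never returns, this separatrix is locally the only trajectory limiting to the pseudoequilibrium, just as in Section \ref{sec:resultsnoslide} (and a bit easier here, since $(0,f(0))$ is a transverse crossing rather than a tangency point). The one thing requiring care — the main obstacle, such as it is — is the bookkeeping of signs: one must verify that ``attracting sliding with an unstable pseudoequilibrium'' is indeed the asserted configuration, and pin down the direction of the sliding flow and the visibility of both tangencies so that the ``stable'' and ``unstable'' labels come out consistent.
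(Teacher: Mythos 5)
Your proposal is correct and follows essentially the same route as the paper: it invokes the time-reversal relationship with the system of Theorem 2 for the saddle existence, computes the same Filippov quantities to identify the attracting sliding segment $[g_-(0),g_+(0)]$ with unstable pseudoequilibrium $(0,f(0))$ of the sliding flow $\dot y = y-f(0)$, and identifies the sliding segment as the unstable-manifold analogue and the two transverse trajectories hitting $(0,f(0))$ in finite time as the stable-manifold analogue. The added details (visibility of the bounding tangencies, uniqueness of the stable-manifold trajectories via backward-time uniqueness of the smooth flows) are correct refinements of what the paper leaves implicit.
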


\begin{proof}
The system in Theorem 3 is obtained by reversing time in the previous system, so we need not repeat the proof that a saddle equilibrium exists and limits to x=0. 

\vspace{1em}

The nonsmooth system again has two tangencies along the discontinuity line.  These occur at $y=g_+(0)$ and $y=g_-(0)$.  The sliding region in this case is attracting, because $S(x,\lambda)=g_-(0)-g_+(0)<0$.  In this system, the sliding flow is given by the equation
\[
\dot{y}=y-f(0)
\]
from which it is again immediately clear that the sliding flow has one unstable equilibrium.
The equilibrium point is unstable, but has no unique unstable manifold.  Instead, a family of trajectories will be attracted to the stable sliding region, and then slide away from the unstable equilibrium.  The sliding region itself might again be considered as the analogue of the unstable manifold.  Again, there will be unique solutions to the equations on either side of the discontinuity which hit the sliding region exactly at the equilibrium of the sliding flow. See Figure \ref{fig:attracting-sliding}. These trajectories arrive at the equilibrium in finite time, and act as analogues to the stable manifold.  So, again, the nonsmooth system displays saddle behavior, albeit slightly different from the saddles of the first two nonsmooth systems.
\end{proof}

\vspace{1em}

\begin{figure}[!ht]
	\centering
	\includegraphics[scale=1]{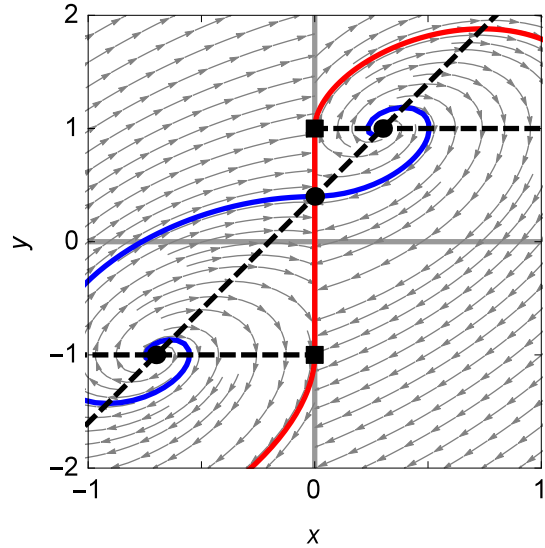}
\caption{Example phase diagram of the nonsmooth limit of (\ref{aSliding}), with $f(x)=2x+0.4$ and $g_a(x)=\tanh(\frac{x}{a})$, so that $g_0(x)=\pm 1$.  Red curves represent the trajectories that have behavior similar to an unstable manifold, and blue curves correspond to a stable manifold. The points show the equilibria of the system, and the squares show where the sliding region begins/ends. Dashed black lines represent the nullclines of the system.}
\label{fig:attracting-sliding}
\end{figure}

\section{Examples}
\label{sec:examples}

\subsection{A nonsmooth system that exhibits all three saddles}
Finally, we find a nonsmooth system which displays all three types of saddle behavior.  In this example we let $g_a(x)=\tanh\left(\frac{x}{a}\right)$, which limits to 
\[
g_0(x) = \left\{\begin{array}{cl}1 & x>0 \\ -1 & x<0. \end{array}\right.
\]
The smooth system is as follows, where $\alpha>0$. 
\[
\begin{array}{rcl}
\dot{x}&=&kg_a(x)-x+y\\
\dot{y}&=&\alpha g_a(x)-y.
\end{array}
\]
\[
J=\begin{bmatrix} -1+kg'_a(x_a) &1\\ \alpha g'_a(x_a) & -1 \end{bmatrix},
\]
and $\det J=1-(k+\alpha)g'_a(x_a)$.  This means that for $\alpha>0$ fixed and $|k|<\alpha$, the conditions given previously for $g_a(x)$ again guarantee that the equilibrium $x_a$ is a saddle point. 

The nonsmooth limit of the previous system is
\begin{equation}\label{3-saddle}
\begin{array}{rcl}
\dot{x}&=&kg_0(x)-x+y\\
\dot{y}&=&\alpha g_0(x) -y,
\end{array}
\end{equation}
where again
\[
g_0(x)=\left\{\begin{array}{cl} 1 & x>0 \\ -1 & x<0. \end{array}\right. = 2\lambda-1,
\]
if $\lambda$ is defined as before.  Tangencies to the discontinuity occur at $y=-k(2\lambda-1)$. See Figure \ref{fig:3-saddle} for illustrations of the phase diagram for this system for varied $k$. For this system, $\mathbf{f}\cdot\nabla h = k(2\lambda-1)-x-y$.  So, $\ode{}{\lambda}S(\mathbf{x},\lambda)=2k$, and when $k>0$, the sliding region is repelling.  When $k\neq0$, the sliding flow in this system is given by
\[
\dot{y}=-\left(\frac{\alpha}{k}+1\right)y.
\]
The equilibrium occurs at $y=0$, and is stable if $\alpha>-k$, which is true for any positive $k$.  Similarly, when $k<0$, the sliding region is attracting.  In this case, the equilibrium is stable if $\alpha>|k|$, so in some region where $|k|$ is small, this is satisfied.  

\vspace{1em}

Because of the division by $k$ in the sliding flow, the case where $k=0$ must be treated separately.  In this case the system is 
\[
\begin{array}{rcl}\dot{x}&=&-x+y\\
\dot{y}&=&\alpha(2\lambda-1) -y.
\end{array}
\]
Here, tangencies occur when $y=x=0$, so there is no sliding region.  Instead, as in Section 2.1, we have saddle behavior through two visible tangencies occurring at the same point, $y=0$.

\begin{figure}[!ht]
	\centering
		\subfloat[]{\includegraphics[scale=1]{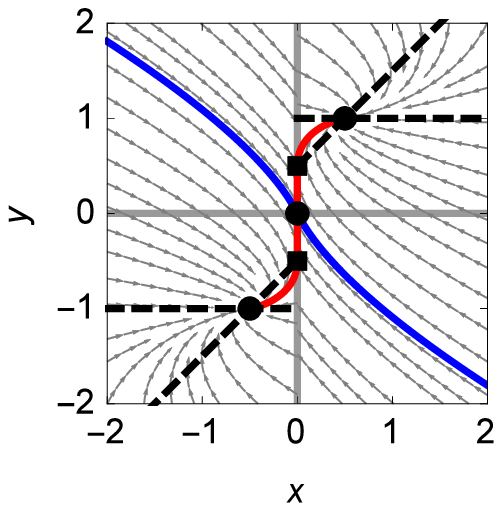}}
		\subfloat[]{\includegraphics[scale=1]{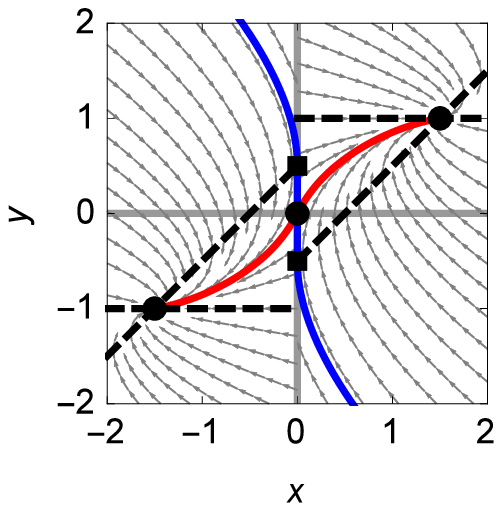}}
		\subfloat[]{\includegraphics[scale=1]{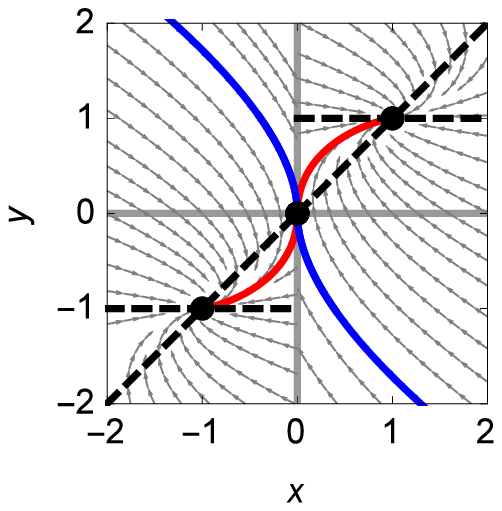}}
\caption{Example phase diagrams of (\ref{3-saddle}), with $\alpha=1$ and (a) $k<0$, (b) $k=0$, and (c) $k>0$. As before, red curves represent the trajectories that have behavior similar to an unstable manifold, and blue curves exhibit behavior similar to a stable manifold. The points show the equilibria of the system, and the squares show where the sliding region begins/ends. Dashed black lines represent the nullclines of the systems.}
\label{fig:3-saddle}
\end{figure}

\subsection{Energy balance model}
{
In this section we analyze the energy balance model \eqref{ebm} from the introduction.  We will reproduce the equations for reference:
\begin{equation*}
	\begin{array}{rl}
		\dot{T} &= Q (1 - \alpha (T)) - (A+BT) \\
		\dot{A} &= - mA + nT+C,
	\end{array}
\end{equation*}
where 
$$
\alpha(T) = \frac{\alpha_i + \alpha_w}{2} - \frac{\alpha_i - \alpha_w}{2} \tanh \left( \frac{T - T_0}{D} \right).
$$
As we will see, the system has a repelling sliding region as $D \rightarrow 0.$  The system is not quite in the same form as the system in Theorem 2 as is, however there is a change of variables by which it is possible.  The resulting system is no more enlightening than \eqref{ebm} is in its current form, so we forgo the change of variables and analyze the system as is.  Computing the Jacobian gives
\begin{equation}
J = \left( \begin{array}{cc}
-Q \alpha'(T)-B 	& 	-1 \\
	n 		& 	-m \end{array} \right),
\end{equation}
and $\det (J) = m (Q \alpha'(T) +B) +n$.  The dependence of $\alpha$ on $T$ is through a hyperbolic tangent with a negative coefficient $(\alpha_i -\alpha_w)/2$.  In the limit as $D \rightarrow 0,$ $\alpha(T)$ limits to 
\begin{equation}
\alpha_0 (T) = \left\{ \begin{array}{ll}
\alpha_i & \text{if } T < T_0 \\
\alpha_w & \text{if } T> T_0.
\end{array} \right.
\end{equation}
Define $A_i = Q(1-\alpha_i) - B T_0$ and define $A_w$ similarly.  If the $A$-nullcline intersects the line $T=T_0$ between $A_i$ and $A_w$, then Figure \ref{fig:ebm} indicates there will be an equilibrium near $T_0$ at say, $T_*(D),$ that limits to $T_0$ as $D \rightarrow 0$.  This occurs if 
$$A_i <  \frac{n}{m} T_0 + C < A_w.$$
Also, in the $D=0$ limit, the derivative of $\alpha(T_*(D)) \rightarrow -\infty.$  So for $D$ small enough, the equilibrium at $T_*(D)$ will be a saddle.  As shown in Figure \ref{fig:climate}, \eqref{ebm} limits to a system with a repelling sliding region with an attracting critical point.  The two proper equilibria that remain in the nonsmooth system are fully attracting since at these points $\det(J) = mB+n >0$ and $\Tr(J) =-(B+m)<0.$  Thus, the zombie saddle at $T_0$ serves as an unstable object between the two attracting equilibria.  

}
\begin{figure}[!ht]
	\centering
	\includegraphics[scale=1]{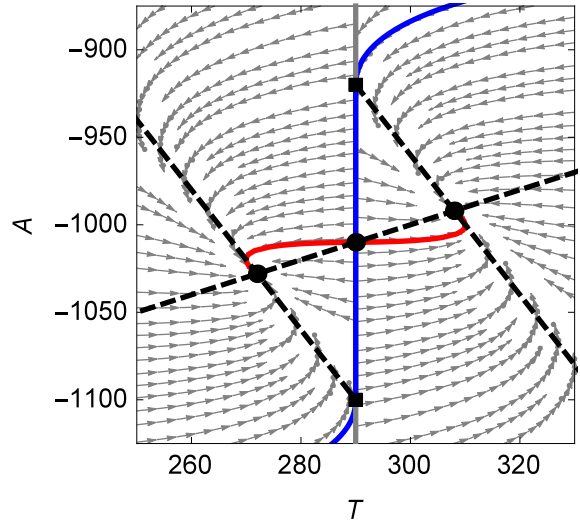}
\caption{Phase diagram of (\ref{ebm2D}) in the nonsmooth limit. Parameter values are the same as in Figure \ref{fig:ebm}, i.e. $m = n = 1,\, Q = 300,\, i = 0.8,\, w = 0.2,\, T_0 = 290,\, B = 4$, and $C = 1300$.  Blue curves represent stable manifold-like behavior, red curves correspond to unstable manifold-like behavior, and the dashed lines represent the nullclines of the system. The points show the equilibria of the system, and the squares show where the sliding region begins/ends.}
\label{fig:climate}
\end{figure}

\section{Discussion}
\label{sec:discussion}
It has been established that there are perils associated with using a nonsmooth approximation of a smooth dynamical system \cite{Jeffrey11}, and one might view this paper as yet another chapter of that cautionary tale---after all, a nonsmooth system may not even reflect the number of equilibrium points of the system it is supposed to approximate.  On the other hand, we view this as highlighting similarities between certain smooth and nonsmooth systems.  In this case, there is saddle behavior in the nonsmooth system that mimics what is observed in the smooth system.  The saddle behavior is caused either by a double-tangency along the splitting line as in \eqref{dblTangency} or by a an equilibrium of a sliding solution as in systems \eqref{rSliding} and \eqref{aSliding}.  We refer to the limiting points in the nonsmooth systems as having ``zombie behavior'' to reflect that even though the saddle equilibrium was destroyed, the saddle behavior still exists in an altered state.  

The recognition of the saddle behavior, especially the existence of trajectories analogous to stable manifolds of a zombie saddle, can be a useful tool for analyzing a dynamical system.  { Under certain parameters, the motivating conceptual climate model \eqref{ebm} exhibits bistability.}  In the smooth system, there is a saddle point between the two attracting equilibria, and the stable manifold of the saddle separates the basins of attraction of the stable equilibria.  Analysis of this sort is standard in smooth systems.  The existence of a zombie saddle allows us to carry out similar analysis in the corresponding nonsmooth system.  Future exploration may examine the manner in which (un)stable manifold of a saddle in the smooth system limits to the (un)stable manifold of the zombie saddle.  The computational difficulty in this endeavor arises because the zombie saddle is not truly an equilibrium, and thus dynamics near the point are not described by its linearization.    Therefore, we cannot rely on the eigenvalues and eigenvectors of the linear system to provide the desired information.  

In smooth systems, if the stable manifold of a saddle is co-dimension one, then it forms a separatrix, dividing phase space into regions with different dynamics.  For this reason, it would be useful to find similar results for zombie saddles in higher dimensions where computing basins of attraction is generally more difficult.  

\section*{Acknowledgments}
We would like to thank Mike Jeffrey, Richard McGehee, and Mary Silber for their conversations throughout the process.

\bibliographystyle{siam}
\bibliography{nonsmooth}

\begin{thebibliography}{10}

\bibitem{abbot11}
{\sc D.~S. Abbot, M.~Silber, and R.~T. Pierrehumbert}, {\em Bifurcations
  leading to summer {A}rctic sea ice loss}, J. Geophys. Res., 116 (2011),
  p.~D19120.

\bibitem{jorm}
{\sc D.~S. Abbot, A.~Voigt, and D.~Koll}, {\em The {J}ormungand global climate
  state and implications for {N}eoproterozoic glaciations}, J. Geophys. Res.,
  116 (2011), p.~D18103.

\bibitem{Abshagen04}
{\sc J.~Abshagen and A.~Timmermann}, {\em An organizing center for thermohaline
  excitability}, J. Phys. Oceanogr., 34 (2004), pp.~2756--2760.

\bibitem{barry2014}
{\sc A.~M. Barry, R.~McGehee, and E.~Widiasih}, {\em A {F}ilippov framework for
  a conceptual climate model}.
\newblock arXiv:1406.6028, 2014.

\bibitem{dV07}
{\sc A.~Colin~de Verdi\`{e}re}, {\em A simple sodel of millennial oscillations
  of the thermohaline circulation}, J. Phys. Oceanogr., 37 (2007),
  pp.~1142--1155.

\bibitem{dV10}
\leavevmode\vrule height 2pt depth -1.6pt width 23pt, {\em The instability of
  the thermohaline circulation in a low-order model}, J. Phys. Oceanogr., 40
  (2010), pp.~757--773.

\bibitem{Colombo12}
{\sc A.~Colombo, M.~Di~Bernardo, S.~J. Hogan, and M.~R. Jeffrey}, {\em
  Bifurcations of piecewise smooth flows: {P}erspectives, methodologies and
  open problems}, Physica D, 241 (2012), pp.~1845--1860.

\bibitem{Desroches12}
{\sc M.~Desroches and M.~R. Jeffrey}, {\em Pinching of canards and folded
  nodes: nonsmooth approximation of slow-fast dynamics}, 2012.

\bibitem{diBernardo}
{\sc M.~di~Bernardo, C.~Budd, A.~R. Champneys, and P.~Kowalczyk}, {\em
  Piecewise-smooth Dynamical Systems: Theory and Applications}, vol.~163,
  Springer, 2008.

\bibitem{Eisenman12}
{\sc I.~Eisenman}, {\em Factors controlling the bifurcation structure of sea
  ice retreat}, J. Geophys. Res., 117 (2012), p.~D01111.

\bibitem{Eisenman09}
{\sc I.~Eisenman and J.~S. Wettlaufer}, {\em Nonlinear threshold behavior
  during the loss of {A}rctic sea ice}, Proc. Natl. Acad. Sci. U.S.A., 106
  (2009), pp.~28--32.

\bibitem{filippov}
{\sc A.~F. Filippov and F.~M. Arscott}, {\em Differential Equations with
  Discontinuous Righthand Sides: Control Systems}, vol.~18, Springer, 1988.

\bibitem{hogg}
{\sc A.~M. Hogg}, {\em Glacial cycles and carbon dioxide: {A} conceptual
  model}, Geophys. Res. Lett., 35 (2008), p.~L01701.

\bibitem{Jeffrey11}
{\sc M.~R. Jeffrey}, {\em Nondeterminism in the limit of nonsmooth dynamics},
  Phys. Rev. Lett., 106 (2011), p.~254103.

\bibitem{Jeffrey14hidden}
\leavevmode\vrule height 2pt depth -1.6pt width 23pt, {\em Hidden dynamics in
  models of discontinuity and switching}, Physica D, 273 (2014), pp.~34--45.

\bibitem{Jeffrey14non}
{\sc M.~R. Jeffrey and D.~J.~W. Simpson}, {\em Non-{F}ilippov dynamics arising
  from the smoothing of nonsmooth systems, and its robustness to noise},
  Nonlinear Dyn., 76 (2014), pp.~1395--1410.

\bibitem{Kuznetsov03}
{\sc Y.~A. Kuznetsov, S.~Rinaldi, and A.~Gragnani}, {\em One-parameter
  bifurcations in planar {F}ilippov systems}, Int. J. Bifurcat. Chaos, 13
  (2003), pp.~2157--2188.

\bibitem{makarenkov}
{\sc O.~Makarenkov and J.~S.~W. Lamb}, {\em Dynamics and bifurcations of
  nonsmooth systems: {A} survey}, Physica D, 241 (2012), pp.~1826--1844.

\bibitem{nusse}
{\sc H.~E. Nusse, E.~Ott, and J.~A. Yorke}, {\em Border-collision bifurcations:
  {A}n explanation for observed bifurcation phenomena}, Phys. Rev. E, 49
  (1994), p.~1073.

\bibitem{simic}
{\sc S.~N. Simic, K.~H. Johansson, J.~Lygeros, and S.~Sastry}, {\em Structural
  stability of hybrid systems}, in European Control Conference, 2001,
  pp.~3858--3863.

\bibitem{teixeira}
{\sc M.~A. Teixeira and P.~R. da~Silva}, {\em Regularization and singular
  perturbation techniques for non-smooth systems}, Physica D, 241 (2012),
  pp.~1948--1955.

\bibitem{walsh2014}
{\sc J.~Walsh and E.~Widiasih}, {\em A dynamics approach to a low-order climate
  model}, Discret. Contin. Dyn. S., 19 (2014), pp.~257--279.

\bibitem{Welander82}
{\sc P.~Welander}, {\em A simple heat-salt oscillator}, Dyn. Atmos. Oceans, 6
  (1982), pp.~233--242.

\bibitem{widiasih}
{\sc E.~R. Widiasih}, {\em Dynamics of the {B}udyko energy balance model}, SIAM
  J. App. Dyn. Sys., 12 (2013), pp.~2068--2092.

\end{thebibliography}

\end{document}